\def\triangle{\Delta}
\def\be1{{\begin{equation}}}
\def\ee1{{\end{equation}}}
\def\part{\partial}
\def\ba{\begin{array}}
\def\ea{\end{array}}
\numberwithin{equation}{section}
\newtheorem{lemma}{Lemma}[section]
\newtheorem{theorem}[lemma]{Theorem}
\title[A monotonicity formula and Type-II singularities ]
{A monotonicity formula and Type-II singularities for the mean curvature flow}
\author{Yongbing Zhang}
\subjclass[2010]{53C42, 53C44}
\keywords{mean curvature flow, monotonicity formula, eternal solution, translating soliton}
\address{School of Mathematical Sciences\\
         USTC\\
         Hefei, 230026, Anhui Province, China.}
\email{ybzhang@amss.ac.cn}
\begin{document}
\maketitle

\begin{abstract}
In this paper, we introduce a monotonicity formula for the mean curvature flow.
We also apply this monotonicity formula to study the asymptotic behavior of eternal solutions.
\end{abstract}

\section{Introduction}

The study of singularities plays an important role in understanding the global nature of mean curvature flow and finding geometric applications.
According to Huisken \cite{Huisken}, singularities for the mean curvature flow are divided into Type-I and Type-II.
A fundamental tool for the blowup analysis of Type-I singularities is Huisken's monotonicity formula \cite{Huisken},
and as a consequence one gets a self-shrinking mean curvature flow.
In this paper we introduce another monotonicity formula, which indicates some kind of connection between Type-II singularity and
translating solution to the mean curvature flow.

Let $F:\Sigma\times [0,T)\rightarrow \mathbb{R}^N$ be a smooth solution to the mean curvature flow
$\frac{d}{dt}F(p,t)=H$ and $M_t=F(\Sigma,t)$.
If $\Sigma$ is a closed manifold and $V$ any constant vector field on $\mathbb{R}^N$,
Ilmanen \cite{Ilmanen94} proved the following monotonicity formula
\begin{equation}\label{compactmonotonicity0}
\frac{d}{dt}\int_{M_t} e^{<x-tV,V>}d\mu_t=-\int_{M_t} |H-V^\perp|^2e^{<x-tV,V>}d\mu_t,
\end{equation}
where $x=F(p,t)$ and $V^\perp$ is the normal projection of $V$.

A surface in $\mathbb{R}^N$ is called a translating soliton if its mean curvature vector is equal to the normal part of some $V$, i.e. $H=V^\perp$.
However there exist no closed translating solitons in $\mathbb{R}^N$, and in general for noncompact $\Sigma$ the integral
$\int_{M_t} e^{<x-tV,V>}d\mu_t$ is not finite.
For this reason and the need for applications to study eternal solutions, we come up with a local formulation.
Let $\Sigma$ be a complete manifold and $M_t$ a solution to the mean curvature flow.
For any constant vector field $V$ on $\mathbb{R}^N$, we consider a reparametrization of the mean curvature flow defined by
\begin{equation}\label{reparamcf0}
\frac{d}{dt}F(p,t)=H+V^T,
\end{equation}
here $V^T$ is the tangential part of $V$.
Let $\widetilde{\Sigma}$ be any relatively compact domain of $\Sigma$ and $\widetilde{M}_t=F(\widetilde{\Sigma},t)$
where $F$ obeys the flow (\ref{reparamcf0}).
Our (local) monotonicity is
\begin{equation}\label{monotonicity}
\Phi_{V}(\widetilde{\Sigma},t):=\int_{\widetilde{M}_t}e^{<x-tV,V>}d\mu_t.
\end{equation}

\begin{theorem}
Under the evolution (\ref{reparamcf0}) we have the monotonicity formulas
\begin{equation}\label{measuremonotonicity}
\frac{d}{dt}(e^{<F(p,t)-tV,V>}d\mu_t)=-|H-V^\perp|^2e^{<F(p,t)-tV,V>}d\mu_t,
\end{equation}
\begin{equation}\label{localmonotonicity}
\frac{d}{dt}\Phi_{V}(\widetilde{\Sigma},t)=-\int_{\widetilde{M}_t} |H-V^\perp|^2e^{<x-tV,V>}d\mu_t.
\end{equation}
\end{theorem}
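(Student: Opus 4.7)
The plan is to reduce the integrated formula (\ref{localmonotonicity}) to the pointwise measure identity (\ref{measuremonotonicity}), and to prove the latter by computing the time derivative of the weight and of the area form separately, then collecting terms into a perfect square.

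The area side rests on the first variation: if $F$ evolves with velocity $X$, then $\frac{d}{dt}\,d\mu_t = (\mathrm{div}_{M_t} X)\, d\mu_t$. Decomposing $X = H + V^T$, one immediately has $\mathrm{div}_{M_t} H = -|H|^2$. The crucial point is the identity
\[
\mathrm{div}_{M_t} V^T = \langle H, V^\perp\rangle,
\]
which uses that $V$ is parallel in $\mathbb{R}^N$: then $\overline{\nabla}_{e_i} V^T = -\overline{\nabla}_{e_i} V^\perp$ in an orthonormal frame $\{e_i\}$ on $M_t$, and the Weingarten formula gives $\langle \overline{\nabla}_{e_i} V^T, e_i\rangle = \langle A_{V^\perp}(e_i), e_i\rangle$; summing over $i$ yields $\langle V^\perp, H\rangle$.

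On the weight side, direct differentiation together with (\ref{reparamcf0}) gives
\[
\frac{d}{dt} e^{\langle F-tV, V\rangle} = \langle H + V^T - V, V\rangle\, e^{\langle F-tV, V\rangle} = \langle H - V^\perp, V\rangle\, e^{\langle F-tV, V\rangle},
\]
and since $H - V^\perp$ is normal this simplifies to $(\langle H, V^\perp\rangle - |V^\perp|^2)\, e^{\langle F-tV, V\rangle}$. Adding the two contributions, the bracket becomes $2\langle H, V^\perp\rangle - |V^\perp|^2 - |H|^2 = -|H - V^\perp|^2$, which is (\ref{measuremonotonicity}). Integrating this pointwise identity over the fixed (hence time-independent) relatively compact domain $\widetilde{\Sigma}$ yields (\ref{localmonotonicity}), with no boundary terms since the domain of integration in parameter space does not move.

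The main, though mild, obstacle is understanding why the tangential reparametrization (\ref{reparamcf0}) is the correct modification: the added $V^T$ produces precisely the $\mathrm{div}_{M_t} V^T = \langle H, V^\perp\rangle$ needed, alongside the $\langle H - V^\perp, V\rangle$ from the weight, to complete the square. Without this tangential piece the variation of $e^{\langle F-tV, V\rangle} d\mu_t$ would acquire a stray divergence term and the formula would fail in its present clean form.
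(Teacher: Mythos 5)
Your proof is correct and follows essentially the same route as the paper: both reduce (\ref{localmonotonicity}) to the pointwise identity (\ref{measuremonotonicity}) by computing the variations of the weight and of the area element separately and combining them into the perfect square $-|H-V^\perp|^2$. The only (cosmetic) difference is that you express the variation of $d\mu_t$ via $\mathrm{div}_{M_t}(H+V^T)$, while the paper traces the evolution $\frac{d}{dt}g_{ij}=-2\langle H-V,A_{ij}\rangle$ of the induced metric; these are the same computation.
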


One sees that $\Phi_{V}(\widetilde{\Sigma},t)$ is non-increasing in $t$ along (\ref{reparamcf0}),
and it is steady for every $\widetilde{\Sigma}$ if and only if $M_t$ is a translating solution with $H=V^\perp$.
The translating solution has arisen from Type-II singularities of the mean-convex mean curvature flow,
see \cite{Hamilton95, HuiskenSinestrari1, HuiskenSinestrari2}.
However for a general Type-II singularity, the limiting flow of rescaled flows is an eternal solution only \cite{HuiskenSinestrari1},
see also Section 2.
Comparing to the correspondence between Type-I singularity and self-shrinking mean curvature flow,
a tight link between Type-II singularity and translating solution, in case that there really exists, had not been found.

The form of (\ref{localmonotonicity}) exhibits some kind of privilege of translating solutions,
similar to the role played by a self-shrinking solution
in Huisken's monotonicity formula \cite{Huisken}.
As mentioned in above, the limiting flow of the rescaled flows near a Type-II singularity is an eternal solution to the mean curvature flow.
The following indicates a direct connection between eternal solution and translating solution.

\begin{theorem}\label{eternal-translatingsolution}
Let $F:\Sigma \times \mathbb{R}\rightarrow \mathbb{R}^N$ be an eternal solution to (\ref{reparamcf0}), i.e. $\frac{d}{ds}F(p,s)=H+V^T$.
Assume the non-increasing 1-parameter family of measures $e^{<F(p,s)-sV,V>}d\mu_s$
on $\Sigma$ converges to finite and positive measures as $s\rightarrow\pm \infty$, i.e.
\begin{equation}\label{finitepositiveassumption0}
0<\lim_{s\rightarrow\infty}e^{<F(p,s)-sV,V>}d\mu_s\leq \lim_{s\rightarrow -\infty}e^{<F(p,s)-sV,V>}d\mu_s<\infty.
\end{equation}
Then there exist $b_k\rightarrow \infty$ and $a_k\rightarrow -\infty$ such that
$M_{b_k}$ and $M_{a_k}$ are asymptotic to translating solitons in the following sense:
for any relatively compact domain $\widetilde{\Sigma}$,
\begin{equation}\label{future0}
\int_{\widetilde{M}_{b_k}}|H-V^\perp|^2e^{<x-b_kV,V>}d\mu\rightarrow 0,
\end{equation}
\begin{equation}\label{ancient0}
\int_{\widetilde{M}_{a_k}}|H-V^\perp|^2e^{<x-a_kV,V>}d\mu\rightarrow 0.
\end{equation}
\end{theorem}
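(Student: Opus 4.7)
The plan is to combine the local monotonicity formula \eqref{localmonotonicity} with hypothesis \eqref{finitepositiveassumption0} to obtain time-integrability of the defect $\int_{\widetilde{M}_r}|H-V^\perp|^2 e^{\langle x-rV,V\rangle}d\mu_r$ on every relatively compact piece of $\Sigma$, and then to extract the desired sequences $b_k,a_k$ by an exhaustion/diagonal argument, since the conclusion must be uniform in the test domain $\widetilde\Sigma$.

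First I would fix an exhaustion $\widetilde\Sigma_1\subset\widetilde\Sigma_2\subset\cdots$ of $\Sigma$ by relatively compact open sets with $\overline{\widetilde\Sigma_j}\subset\widetilde\Sigma_{j+1}$ and $\bigcup_k\widetilde\Sigma_k=\Sigma$, and set
\[
f_k(r):=\int_{F(\widetilde\Sigma_k,r)}|H-V^\perp|^2 e^{\langle x-rV,V\rangle}d\mu_r.
\]
Integrating \eqref{localmonotonicity} on $[a,b]$ gives $\int_a^b f_k(r)\,dr=\Phi_V(\widetilde\Sigma_k,a)-\Phi_V(\widetilde\Sigma_k,b)$. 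By \eqref{finitepositiveassumption0} the non-increasing quantity $\Phi_V(\widetilde\Sigma_k,s)$ is bounded above by the total mass of the $s\to-\infty$ limit measure, which is finite. Sending $a\to-\infty$ and $b\to\infty$, I conclude $\int_{\mathbb{R}} f_k(r)\,dr<\infty$ for every $k$.

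Since $f_k\ge 0$ is Lebesgue integrable on $\mathbb{R}$, the set $\{r:f_k(r)>1/k\}$ has finite Lebesgue measure, so I may choose $b_k>k$ with $f_k(b_k)<1/k$ and $a_k<-k$ with $f_k(a_k)<1/k$. I claim these are the required sequences. Given any relatively compact $\widetilde\Sigma\subset\Sigma$, the exhaustion property yields $k_0$ with $\widetilde\Sigma\subset\widetilde\Sigma_{k_0}$; non-negativity of the integrand together with the inclusion $\widetilde\Sigma_{k_0}\subset\widetilde\Sigma_k$ for $k\ge k_0$ gives
\[
\int_{\widetilde M_{b_k}}|H-V^\perp|^2 e^{\langle x-b_kV,V\rangle}d\mu \;\le\; f_{k_0}(b_k)\;\le\; f_k(b_k)\;<\;1/k,
\]
and the same bound at $a_k$. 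This proves \eqref{future0} and \eqref{ancient0}.

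The step I expect to require the most care is precisely this uniformity-in-$\widetilde\Sigma$ conclusion: producing a \emph{single} pair of sequences that simultaneously works for every relatively compact domain, rather than one pair per domain. The nested exhaustion together with the monotonicity of $f_k$ in $k$ resolves it cleanly, because controlling $f_k(b_k)$ automatically controls the integral over every subdomain of $\widetilde\Sigma_k$. The remainder of the argument is a direct consequence of the integrated monotonicity \eqref{localmonotonicity} combined with the finiteness side of \eqref{finitepositiveassumption0}; the positivity side of \eqref{finitepositiveassumption0} is not used in the extraction itself, but it is what guarantees that the eternal solution is genuinely nontrivial in both time directions.
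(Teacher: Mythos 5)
Your proposal is correct and follows essentially the same route as the paper: integrate the local monotonicity formula \eqref{localmonotonicity} in time, use the finiteness of the limiting measure from \eqref{finitepositiveassumption0} to conclude $\int_{\mathbb{R}}f_k(r)\,dr<\infty$, and then select time slices where the defect is small. The only place you go beyond the paper is the exhaustion/diagonal argument producing a \emph{single} pair of sequences $b_k,a_k$ valid simultaneously for every relatively compact $\widetilde\Sigma$ --- the paper's proof fixes $\widetilde\Sigma$ first and leaves this quantifier issue implicit, so your extra step is a tightening of the same argument rather than a different approach.
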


This is a straightforward application of (\ref{localmonotonicity}) to eternal solutions.
Note that for a translating solution with $H=V^\perp$, (\ref{reparamcf0}) gives rise to $F(p,s)=F(p,0)+sV$.
In particular along (\ref{reparamcf0}), $e^{<F(p,s)-sV,V>}$ and $d\mu_s$ are independent of $s$.
Theorem \ref{eternal-translatingsolution} provides a sufficient condition, i.e. (\ref{finitepositiveassumption0}),
which guarantees the existence of a translating soliton
with inherited properties from the original mean curvature flow.

Along Theorem \ref{eternal-translatingsolution}, we propose the following general questions:
(1) Can we find $V$ such that (\ref{finitepositiveassumption0}) is satisfied for some specific mean curvature flows?
(2) When is an eternal solution asymptotic to a translating soliton in the sense of (\ref{future0}) and (\ref{ancient0})
in fact a translating solution?

The remaining part of this paper is arranged as follows.
In Section 2, we provide with necessary notations and background.
In Section 3, we prove the monotonicity formulas.
In the last Section, we prove Theorem \ref{eternal-translatingsolution}.

We would like to thank Kaole Si for pointing out to us the reference \cite{Ilmanen94}.

\section{Notations and background}

In this section, we briefly present the notations.
We then give a rough summary of some fundamental concerning singularities
of the mean curvature flow and blowup analysis at singularities. See for instance the survey \cite{Smoczyk}.

Let $F:\Sigma^n\times [0,T)\rightarrow \mathbb{R}^N$ be a smooth maximal solution to the mean curvature flow
\begin{equation}\label{mcf}
\frac{d}{dt}F(p,t)=H(F(p,t),t), \quad p\in \Sigma.
\end{equation}
Let $(x^i)_{i=1}^n$ denote local coordinates on $\Sigma$, $F_i=dF_t(\frac{\partial}{\partial x^i})$
be a local frame of the tangent space of $M_t:=F_t(\Sigma)$.
The second fundamental form of $M_t$ is denoted by
$A_{ij}=(\overline{\nabla}_{F_i}F_j)^\perp$,
where $\overline{\nabla}$ is the Levi-Civita connection of the ambient space $\mathbb{R}^N$.
Let $\overline{g}$ or $<\cdot,\cdot>$ be the Euclidean metric on $\mathbb{R}^N$
and $g_{ij}:=<F_i,F_j>=F_t^*(\overline{g}|_{M_t})(\frac{\partial}{\partial x^i},\frac{\partial}{\partial x^j})$
denote the induced metric on $(\Sigma,t)$.
Then the mean curvature vector field is $H=g^{ij}A_{ij}$ and the norm of the second fundamental form is $|A|^2:=g^{ik}g^{jl}<A_{ij},A_{kl}>$.
Via the isometry $F_t^*$, $d\mu_t$ denotes volume elements on both $M_t$ and $(\Sigma,t)$.

If $\Sigma$ is an $n$-dimensional closed manifold, the first singular time $T<\infty$.
Moreover at time $T$, $\limsup_{t\rightarrow T} \max_{M_t}|A|^2= \infty$.
Huisken \cite{Huisken} distinguishes first time singularities by the blowup rate of $\max_{M_t}|A|^2$.
If there exists a positive constant $C$ such that for all $0\leq t<T$ and $p\in \Sigma$, it holds that
$$(T-t)|A(p,t)|^2\leq C,$$
one says that the mean curvature flow (\ref{mcf}) develops a Type-I singularity. Otherwise one says that
(\ref{mcf}) develops a Type-II singularity, where $\sup_{x\in M_t}|A(x,t)|^2(T-t)=\infty$.

To study a singularity one usually perform rescalings around the space-time singularity.
A point $x_0\in \mathbb{R}^N$ is called a blowup point, if there exists a sequence $p_k \in \Sigma$ such that
$$\lim_{t\rightarrow T}F(p_k,t) = x_0,\quad \lim_{t\rightarrow T}|A(p_k, t)| =\infty.$$
For any $\lambda>0$, one can define a parabolic dilation of $M_t$, which is centered at $(x_0,T)$, by
\begin{equation}\label{centralblowup}
M_{\lambda,s}=\lambda(M_{T+\lambda^{-2}s}-x_0), \quad s\in [-\lambda^2T,0).
\end{equation}
$M_{\lambda,s}$ is a solution to the non-parametrized mean curvature flow and is called a rescaled flow.
For a Type-I singularity $(x_0,T)$, there exists a sequence $\lambda_j\rightarrow \infty$ such that $M_{\lambda_j,s}$
converges smoothly to a limiting flow, denoted by $M_{\infty,s}$, which is a self-shrinking mean curvature flow.
Huisken's monotonicity formula \cite{Huisken} is the reason why the limiting flow is a self-shrinking mean curvature flow.
For a Type-II singularity $(x_0,T)$, Ilmanen \cite{Ilmanen} and White \cite{White} show that there exists a sequence of
rescaled flows $M_{\lambda_j,s}$, defined also by (\ref{centralblowup}), which converges weakly to a limiting flow.
However one of the disadvantages is that in general the limiting flow is not smooth any more.

Following Hamilton's idea in Ricci flow \cite{Hamilton93}, Huisken and Sinestrari \cite{HuiskenSinestrari1}
introduce a rescaling procedure for Type-II singularities descried below.
One can first choose an essential blowup sequence $(x_k, t_k)$, i.e.
for any $k\geq  1$, let $t_k \in [0, T - 1/k]$ and $x_k \in M_{t_k}$ be such that
$$|A(x_k,t_k)|^2(T-\frac{1}{k}-t_k)=\max_{t\leq T-\frac{1}{k},x\in M_t}|A(x,t)|^2(T-\frac{1}{k}-t).$$
Let
$$L_k=|A(x_k,t_k)|, \quad \alpha_k=-L_k^2t_k, \quad \omega_k=L_k^2(T-t_k-\frac{1}{k}).$$
Huisken and Sinestrari show that for a singularity of Type-II, the following holds
$$\omega_k\rightarrow +\infty,\quad L_k\rightarrow +\infty, \quad t_k\rightarrow T,  \quad \alpha_k\rightarrow -\infty.$$
One can then consider the following rescaled mean curvature flows
\begin{equation}\label{HSrescaling}
M_{k,s}:=L_k(M_{t_k+L_k^{-2}s}-x_k), \quad s\in [\alpha_k,\omega_k].
\end{equation}
For each rescaled flow (\ref{HSrescaling}), $0\in M_{k,0}$ and $|A(0,0)|=1$.
Moreover, there exists a subsequence of rescaled flows which converges smoothly on every compact set to a limiting flow $M_{\infty,s}$.
$M_{\infty,s}$ is a solution to the mean curvature flow defined for all $s\in \mathbb{R}$, i.e. an eternal solution.
Moreover the eternal solution $M_{\infty,s}$ has the properties that $0\in M_{\infty,0}$, $|A|\leq 1$ and $|A(0,0)|=1$.

Type-II singularities of the mean curvature flow starting from a closed hypersurface in $\mathbb{R}^{n+1}$ with positive scalar mean curvature
has been well understood through the works \cite{Hamilton95,HuiskenSinestrari2}.
In this setting, Huisken and Sinestrari \cite{HuiskenSinestrari2} show that the limiting flow
arising from a Type-II singularity is a convex eternal solution.
By employing a Harnack inequality, Hamilton \cite{Hamilton95} had shown that
any strictly convex eternal solution to the mean curvature flow where the mean curvature assumes its maximum value at a point in space-time
must be a translating soliton.

\section{Monotonicity formulas}

In this section, we prove the monotonicity formulas.
Let $F:\Sigma\times [0,T)\rightarrow \mathbb{R}^N$ be a solution to the non-parametrized mean curvature flow $(\frac{dF}{dt})^\perp=H$
and $x=F(p,t)$ denote the position. In the sequel, $V$ always stands for a constant vector field on $\mathbb{R}^N$.
We first consider the case that $\Sigma$ is a closed manifold. Set
$$\Phi_{V}(t)=\int_{M_t} e^{<x-tV,V>}d\mu_t=\int_\Sigma e^{<F(p,t)-tV,V>}d\mu_t.$$
The functional $\int_Me^{<x,V>}d\mu$ had been introduced by Ilmanen in \cite{Ilmanen94}. 
Shahriyari \cite{Shahriyari} utilizes the functional in the study of graphic translating solitons in $\mathbb{R}^3$.

\begin{theorem}[Ilmanen \cite{Ilmanen94}]
Let $\Sigma$ be a closed manifold and $M_t=F_t(\Sigma)$ a solution to the flow $(\frac{d}{dt}F)^\perp=H$.
Then for any $V$, 
\begin{equation}\label{compactmonotonicity}
\frac{d}{dt}\Phi_{V}(t)=-\int_{M_t} |H-V^\perp|^2e^{<x-tV,V>}d\mu_t.
\end{equation}
\end{theorem}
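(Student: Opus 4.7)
The plan is to reduce the theorem to a pointwise computation by exploiting that $\Phi_V(t)$ depends only on the image $M_t$, not on the parametrization. Since $\Sigma$ is closed, I am free to replace the given normal flow $(dF/dt)^\perp = H$ by the reparametrization $\frac{d}{dt}F = H + V^T$ from (\ref{reparamcf0}), which traces out the same family $M_t$. Under this choice the exponent $\langle F - tV, V\rangle$ acquires a clean time derivative, and the volume form transforms in a controlled way.

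First I would compute the two factors separately. For the exponent, $\frac{d}{dt}\langle F - tV, V\rangle = \langle H + V^T - V, V\rangle = \langle H, V^\perp\rangle - |V^\perp|^2$, using $V = V^T + V^\perp$ and orthogonality of $H$ to $V^T$. For the measure, the first variation of area with ambient velocity $X = H + V^T$ gives
\begin{equation*}
\frac{d}{dt}d\mu_t = \bigl(-\langle X, H\rangle + \mathrm{div}_{M_t} X^T\bigr)\,d\mu_t = \bigl(-|H|^2 + \mathrm{div}_{M_t} V^T\bigr)d\mu_t.
\end{equation*}
The one nontrivial ingredient is the identity $\mathrm{div}_{M_t} V^T = \langle V, H\rangle$ for a constant ambient vector $V$. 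This I would verify in a local frame $\{e_i\}$ by writing $D_{e_i}V^T = -D_{e_i}V^\perp$ (since $V$ is parallel) and using $\langle V^\perp, e_i\rangle \equiv 0$ to get $\langle D_{e_i}V^\perp, e_i\rangle = -\langle V^\perp, D_{e_i}e_i\rangle = -\langle V, A_{ii}\rangle$; summing yields the identity.

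Putting the two derivatives together,
\begin{equation*}
\frac{d}{dt}\bigl(e^{\langle F-tV, V\rangle}d\mu_t\bigr) = \bigl(\langle H, V^\perp\rangle - |V^\perp|^2 - |H|^2 + \langle V, H\rangle\bigr)e^{\langle F-tV, V\rangle}d\mu_t,
\end{equation*}
and the bracketed expression collapses, via $\langle V, H\rangle = \langle V^\perp, H\rangle$, to $-\bigl(|H|^2 - 2\langle H, V^\perp\rangle + |V^\perp|^2\bigr) = -|H - V^\perp|^2$. This establishes the pointwise monotonicity (\ref{measuremonotonicity}) along the flow (\ref{reparamcf0}), and integrating over the closed $\Sigma$ yields (\ref{compactmonotonicity}).

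Since the key calculations are direct once the parametrization is fixed, I do not foresee a genuine obstacle; the only subtle point is bookkeeping the tangential freedom. If one instead insisted on working with the original normal-only flow $F_t = H$, the pointwise identity would fail and one would have to recover the result by integrating by parts on the closed manifold $\Sigma$, using exactly the identity $\mathrm{div}_M V^T = \langle V, H\rangle$ above to see that $\int_\Sigma \mathrm{div}_{M_t}(V^T e^{\langle F-tV,V\rangle})d\mu_t = 0$ absorbs the discrepancy. Either route lands on the same statement.
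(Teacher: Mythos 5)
Your proof is correct, but it takes a different route from the one the paper uses for this particular theorem. The paper keeps the normal parametrization $\frac{d}{dt}F=H$, computes $\frac{d}{dt}\Phi_V(t)=\int_{M_t}(\langle H,V\rangle-|V|^2-|H|^2)e^{\langle x-tV,V\rangle}d\mu_t$, and then adds the integrated identity $0=\int_{M_t}\triangle\bigl(e^{\langle x-tV,V\rangle}\bigr)d\mu_t=\int_{M_t}(\langle H,V\rangle+|V^{\top}|^2)e^{\langle x-tV,V\rangle}d\mu_t$, which is exactly the ``integrate by parts on the closed manifold'' alternative you sketch in your final paragraph. You instead pass to the tangentially reparametrized flow $\frac{d}{dt}F=H+V^T$ and establish the pointwise identity $\frac{d}{dt}\bigl(e^{\langle F-tV,V\rangle}d\mu_t\bigr)=-|H-V^\perp|^2e^{\langle F-tV,V\rangle}d\mu_t$ before integrating; all your intermediate computations (the derivative of the exponent, the first variation of area, and $\mathrm{div}_{M_t}V^T=\langle V,H\rangle$) check out. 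What your route buys is the stronger measure-level statement, valid without any compactness and with closedness of $\Sigma$ used only to integrate at the end -- and indeed this is precisely how the paper proves its subsequent local monotonicity formulas (\ref{pointwisemonotonicity1}) and (\ref{localmonotonicity1}). What the paper's route buys here is that it stays entirely within the given normal flow, at the cost of genuinely needing $\Sigma$ closed so that the integral of the Laplacian vanishes. The two arguments share the same computational kernel, since $\triangle e^{\langle x-tV,V\rangle}=\mathrm{div}_{M_t}\bigl(V^Te^{\langle x-tV,V\rangle}\bigr)$ is your divergence identity in disguise.
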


\begin{proof}
Note that $\Phi_{V}(t)$ is independent of the parametrization of $M_t$.
One can assume for simplicity that $F(p,t)$ is a solution to the mean curvature flow $\frac{d}{dt}F=H$.
Hence
\begin{eqnarray*}
\frac{d}{dt}\Phi_{V}(t)&=&\int_{M_t} (<H,V>-|V|^2-|H|^2)e^{<x-tV,V>}d\mu_t.
\end{eqnarray*}
On the other hand, we have
\begin{eqnarray*}
0&=&\int_{M_t}  \triangle (e^{<x-tV,V>})d\mu_t
\\&=&\int_{M_t} (<H,V>+|V^\top|^2)e^{<x-tV,V>}d\mu_t
\\&=&\int_{M_t} (<H,V>+|V|^2-|V^\perp|^2)e^{<x-tV,V>}d\mu_t.
\end{eqnarray*}
By adding the above two identities, we get
\begin{eqnarray*}
\frac{d}{dt}\Phi_{V}(t)&=&\int_{M_t} (2<H,V>-|V^\perp|^2-|H|^2)e^{<x-tV,V>}d\mu_t
\\&=&-\int_{M_t} |H-V^\perp|^2e^{<x-tV,V>}d\mu_t.
\end{eqnarray*}
\end{proof}

Note that for a closed $\Sigma$, the monotonicity (\ref{compactmonotonicity}) is strict.
This is due to the simple fact that any complete translating soliton $M$ with $H=V^\perp$ must be non-compact.
In fact by $H=V^\perp$,
$$\int_M <H,V>d\mu=\int_M |H|^2d\mu >0,$$
which contradicts with, if $M$ is closed,
$$\int_M <H,V>d\mu=\int_Mdiv(V^T)d\mu=0.$$

In order to study the limiting eternal solution $M_{\infty,s}$ of rescaled flows (\ref{HSrescaling}),
what we need is a local version of (\ref{compactmonotonicity}).
Let $M_t$ be a mean curvature flow with $\Sigma$ complete,
we consider a reparametrization of the mean curvature flow and define
\begin{equation}\label{reparamcf}
\frac{d}{dt}F(p,t)=H+V^T.
\end{equation}
For any relatively compact domain $\widetilde{\Sigma}$ of $\Sigma$,
let $\widetilde{M}_t=F(\widetilde{\Sigma},t)$ where $F$ satisfies the flow (\ref{reparamcf}).
We define
\begin{equation}\label{localmonoquantity}
\Phi_{V}(F_t(\widetilde{\Sigma}),t)=\int_{\widetilde{\Sigma}}e^{<F(p,t)-tV,V>}d\mu_t.
\end{equation}
Or in another equivalent form,
\begin{equation*}
\Phi_{V}(F_t(\widetilde{\Sigma}),t)=\int_{\widetilde{M_t}} e^{<x-tV,V>}d\mu_t.
\end{equation*}
We simply use $\Phi_{V}(\widetilde{\Sigma},t)$ to denote (\ref{localmonoquantity}).
For any given $t$, $\Phi_{V}(\widetilde{\Sigma},t)$ is independent of the internal parametrization of $\widetilde{M}_t$.
However when $t$ varies, $\Phi_{V}(\widetilde{\Sigma},t)$
depends on the reparametrization of the non-parametrized mean curvature flow $(\frac{dF}{dt})^\perp=H$.
To put an emphasis on the reparametrization given by the tangential part of $V$, we call (\ref{reparamcf})
a mean curvature V-flow, or simply a V-flow.

We now describe a scaling property of the monotonicity (\ref{localmonoquantity}).
Let $M_t$ be a solution to the non-parametrized mean curvature flow $(\frac{dx}{dt})^\perp=H$.
Consider a reparametrization $F(p,t)$ of $M_t$ and a rescaled flow defined by
$$F_\lambda(p,s)=\lambda[F(p,t_0+\lambda^{-2}s)-x_0],$$
where $\lambda>0$,  $x_0\in \mathbb{R}^N$ and $t_0<T$.
Let $t=t_0+\lambda^{-2}s$. Then $F_\lambda(p,s)$ is a solution to the mean curvature V-flow
$$\frac{d}{ds}F_\lambda(p,s)=H(F_\lambda(p,s))+V^T$$
if and only if
$$\frac{d}{dt}F(p,t)=H(F(p,t))+(\lambda V)^T.$$
Note that
\begin{eqnarray*}
\int_{\widetilde{\Sigma}}e^{<F_\lambda(p,s)-sV,V>}d\mu_s
&=&\int_{\widetilde{\Sigma}}e^{<\lambda(F(p,t)-x_0)-\lambda^2(t-t_0)V,V>}\lambda^nd\mu_t
\\&=&\lambda^n\int_{\widetilde{\Sigma}}e^{<(F(p,t)-x_0)-(t-t_0)\lambda V,\lambda V>}d\mu_t,
\end{eqnarray*}
hence for the monotonicity (\ref{localmonoquantity}) one has the following scaling property
\begin{equation}\label{scalingproperty}
\Phi_V(F_{\lambda,s}(\widetilde{\Sigma}),s)=\lambda^n\Phi_{\lambda V}(F_t(\widetilde{\Sigma})-x_0,t-t_0).
\end{equation}

\begin{theorem}
Let $\widetilde{\Sigma}$ be any relatively compact domain of $\Sigma$ and $V$ a constant vector field on $\mathbb{R}^N$,
then under the V-flow (\ref{reparamcf}) we have
\begin{equation}\label{pointwisemonotonicity1}
\frac{d}{dt}(e^{<F(p,t)-tV,V>}d\mu_t)=-|H-V^\perp|^2e^{<F(p,t)-tV,V>}d\mu_t,
\end{equation}
\begin{equation}\label{localmonotonicity1}
\frac{d}{dt}\Phi_{V}(\widetilde{\Sigma},t)=-\int_{\widetilde{\Sigma}} |H-V^\perp|^2e^{<F(p,t)-tV,V>}d\mu_t=-\int_{\widetilde{M}_t} |H-V^\perp|^2e^{<x-tV,V>}d\mu_t.
\end{equation}
\end{theorem}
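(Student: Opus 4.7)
The plan is to derive the pointwise identity \eqref{pointwisemonotonicity1} by differentiating the product $e^{<F(p,t)-tV,V>}d\mu_t$ and then simplifying via a single algebraic identity; the integrated version \eqref{localmonotonicity1} follows by integrating over $\widetilde\Si$ and passing the derivative through the integral (which is justified because $\widetilde\Si$ is relatively compact, so all quantities are smooth and bounded).

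First I would differentiate the scalar factor. Since $V$ is constant and $\frac{d}{dt}F=H+V^T$, a direct computation gives
\begin{equation*}
\frac{d}{dt}<F(p,t)-tV,V>=<H+V^T,V>-|V|^2=<H,V>+|V^T|^2-|V|^2=<H,V>-|V^\perp|^2.
\end{equation*}
Next I would handle the volume form using the first variation formula. For a general deformation $\frac{d}{dt}F=X$, we have $\frac{d}{dt}d\mu_t=\div_{M_t}(X)\,d\mu_t$, where $\div_{M_t}(X)=\div_{M_t}(X^T)-<H,X^\perp>$. Applied with $X=H+V^T$ (so that $X^T=V^T$ and $X^\perp=H$), this yields
\begin{equation*}
\frac{d}{dt}d\mu_t=\bigl(\div_{M_t}(V^T)-|H|^2\bigr)d\mu_t.
\end{equation*}

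The key identity that glues everything together is $\div_{M_t}(V^T)=<H,V>$. To see this, note that $V$ is a parallel ambient vector field, so $\div_{M_t}(V)=0$; decomposing $V=V^T+V^\perp$ and using the same splitting formula gives $0=\div_{M_t}(V^T)-<H,V^\perp>$, i.e.\ $\div_{M_t}(V^T)=<H,V^\perp>=<H,V>$ (the last equality because $H$ is normal). Combining the two derivatives and using this identity, the total coefficient in front of $e^{<F-tV,V>}d\mu_t$ becomes
\begin{equation*}
\bigl(<H,V>-|V^\perp|^2\bigr)+\bigl(<H,V>-|H|^2\bigr)=2<H,V^\perp>-|V^\perp|^2-|H|^2=-|H-V^\perp|^2,
\end{equation*}
which is exactly \eqref{pointwisemonotonicity1}.

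Finally, \eqref{localmonotonicity1} is obtained by integrating \eqref{pointwisemonotonicity1} over the fixed domain $\widetilde\Si\subset\Si$ in the parameter space: since $\widetilde\Si$ is relatively compact and the parametrization is given by the V-flow \eqref{reparamcf} itself, differentiation under the integral sign is legal, and the result may be rewritten as an integral over $\widetilde M_t$ using $d\mu_t=F_t^*(\overline g|_{M_t})$. I do not anticipate a real obstacle here; the only subtle step is recognizing that the extra term $\div_{M_t}(V^T)$ produced by the tangential reparametrization is precisely what is needed to convert the Ilmanen-type closed-manifold calculation into a purely local, pointwise statement.
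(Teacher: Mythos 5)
Your proposal is correct and follows essentially the same route as the paper: a pointwise computation of $\frac{d}{dt}\big(e^{<F-tV,V>}d\mu_t\big)$ under the V-flow, followed by integration over the fixed relatively compact domain $\widetilde\Sigma$. The only cosmetic difference is that the paper derives $\frac{d}{dt}d\mu_t=-<H-V,H>d\mu_t$ from the metric evolution $\frac{d}{dt}g_{ij}=-2<H-V,A_{ij}>$, whereas you obtain the same identity from the first-variation formula together with $\mathrm{div}_{M_t}(V^T)=<H,V>$ --- the latter being exactly the trace of the former.
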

\begin{proof}
Under the evolution (\ref{reparamcf}), the induced metric on $\widetilde{\Sigma}$ evolves according to
$$\frac{d}{dt}g_{ij}(p,t)=-2<H-V,A_{ij}>.$$
Hence we have
$$\frac{d}{dt}d\mu_t=-<H-V,H>d\mu_t$$
and
$$\frac{d}{dt}(e^{<F(p,t)-tV,V>}d\mu_t)=-|H-V^\perp|^2e^{<F(p,t)-tV,V>}d\mu_t.$$
Then
\begin{eqnarray*}
\frac{d}{dt}\Phi_{V}(\widetilde{\Sigma},t)
&=&\frac{d}{dt}\int_{\widetilde{\Sigma}}e^{<F(p,t)-tV,V>}d\mu_t
=-\int_{\widetilde{\Sigma}} |H-V^\perp|^2e^{<F(p,t)-tV,V>}d\mu_t.
\end{eqnarray*}
\end{proof}

\section{Eternal solution and translating soliton}

We now apply the monotonicity formulas (\ref{pointwisemonotonicity1}) and (\ref{localmonotonicity1})
to study eternal solutions and prove Theorem \ref{eternal-translatingsolution}.
Let $M_s$ be an eternal solution of the mean curvature flow and reparametrize it so that it satisfies the mean curvature V-flow
$$\frac{d}{ds}F(p,s)=H+V^T.$$

\begin{theorem}
Let $F:\Sigma \times \mathbb{R}\rightarrow \mathbb{R}^N$ be an eternal solution to the V-flow.
Assume the non-increasing 1-parameter family of measures $e^{<F(p,s)-sV,V>}d\mu_s$ on $\Sigma$ converges to finite and positive measures as
$s\rightarrow\pm \infty$, i.e.
\begin{equation}\label{finitepositiveassumption}
0<\lim_{s\rightarrow\infty}e^{<F(p,s)-sV,V>}d\mu_s\leq \lim_{s\rightarrow -\infty}e^{<F(p,s)-sV,V>}d\mu_s<\infty.
\end{equation}
Then there exist $b_k\rightarrow \infty$ and $a_k\rightarrow -\infty$ such that
$M_{b_k}$ and $M_{a_k}$ are asymptotic to translating solitons in the following sense:
for any relatively compact domain $\widetilde{\Sigma}$,
\begin{equation}\label{future}
\int_{\widetilde{M}_{b_k}}|H-V^\perp|^2e^{<x-b_kV,V>}d\mu\rightarrow 0,
\end{equation}
\begin{equation}\label{ancient}
\int_{\widetilde{M}_{a_k}}|H-V^\perp|^2e^{<x-a_kV,V>}d\mu\rightarrow 0.
\end{equation}
\end{theorem}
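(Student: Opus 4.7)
The plan is to integrate the local monotonicity formula \eqref{localmonotonicity1} in time along an exhaustion of $\Sigma$, deduce that the flux $|H-V^\perp|^2$ weighted by $e^{<x-sV,V>}$ has finite total integral over $\Sigma\times\mathbb{R}$, and then extract sequences $b_k,a_k$ along which this flux vanishes.

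First, I would fix an exhaustion $\widetilde{\Sigma}_1 \subset \widetilde{\Sigma}_2 \subset \cdots$ of $\Sigma$ by relatively compact open subsets with $\bigcup_j \widetilde{\Sigma}_j = \Sigma$. For each $j$ and any $s_1<s_2$, integrating \eqref{localmonotonicity1} in time gives
$$\Phi_V(\widetilde{\Sigma}_j,s_1)-\Phi_V(\widetilde{\Sigma}_j,s_2) = \int_{s_1}^{s_2}\int_{\widetilde{\Sigma}_j} |H-V^\perp|^2 e^{<F(p,s)-sV,V>}\, d\mu_s\, ds.$$
Letting $s_1\to-\infty$, $s_2\to+\infty$, and applying monotone convergence on the right, while using the hypothesis \eqref{finitepositiveassumption} on the left to bound the two endpoint values by $\lim_{s\to-\infty}\Phi_V(\Sigma,s)<\infty$ and $\lim_{s\to\infty}\Phi_V(\Sigma,s)>0$, I obtain
$$\int_{-\infty}^{\infty} f_j(s)\,ds \;\le\; \lim_{s\to-\infty}\Phi_V(\Sigma,s) - \lim_{s\to\infty}\Phi_V(\Sigma,s) \;<\; \infty,$$
where $f_j(s) := \int_{\widetilde{\Sigma}_j}|H-V^\perp|^2 e^{<F(p,s)-sV,V>}\, d\mu_s$, and the bound is uniform in $j$.

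Second, for each fixed $j$, $f_j\ge 0$ is continuous in $s$ and lies in $L^1(\mathbb{R})$, so an elementary tail argument (if $\liminf_{s\to\infty}f_j(s)\ge c>0$ then $f_j(s)\ge c/2$ on a half-line of infinite measure, contradicting integrability) forces $\liminf_{s\to\pm\infty} f_j(s)=0$. To produce a single pair of sequences $b_k\to+\infty$ and $a_k\to-\infty$ that works for every relatively compact $\widetilde{\Sigma}$, I would diagonalize: choose $b_k>k$ with $f_k(b_k)<1/k$ and $a_k<-k$ with $f_k(a_k)<1/k$. Because any relatively compact $\widetilde{\Sigma}\subset\Sigma$ has compact closure and $\{\widetilde{\Sigma}_j\}$ is an open cover of that closure, it sits inside some $\widetilde{\Sigma}_{j_0}$; then for all $k\ge j_0$,
$$\int_{\widetilde{M}_{b_k}}|H-V^\perp|^2 e^{<x-b_kV,V>}\,d\mu \;\le\; f_{j_0}(b_k) \;\le\; f_k(b_k) \;<\; 1/k,$$
yielding \eqref{future}, and the analogous estimate at $a_k$ yields \eqref{ancient}.

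The only point requiring genuine care is the global integrability step: the hypothesis \eqref{finitepositiveassumption} must be read as asserting that the limit measures on $\Sigma$ have finite total mass, which, combined with the pointwise monotonicity \eqref{pointwisemonotonicity1}, makes $\Phi_V(\Sigma,s)$ finite for every $s$ and delivers the uniform-in-$j$ bound on the right-hand side after passing $j\to\infty$. Everything else — integration of the monotonicity identity in time, the $L^1$-to-$\liminf=0$ deduction, and the diagonalization across the exhaustion that upgrades the conclusion to hold for every $\widetilde{\Sigma}$ simultaneously — is routine, so the substantive content of the theorem is essentially already captured by \eqref{localmonotonicity1}.
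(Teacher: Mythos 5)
Your proof is correct and follows essentially the same route as the paper's: integrate the local monotonicity formula \eqref{localmonotonicity1} in time to get the identity \eqref{s1s2}, conclude that $s\mapsto f_j(s)=\int_{\widetilde{\Sigma}_j}|H-V^\perp|^2e^{<F(p,s)-sV,V>}d\mu_s$ is integrable on $\mathbb{R}$ (the hypothesis \eqref{finitepositiveassumption} being needed only for the $s\to-\infty$ end, since the $s\to+\infty$ end is already controlled by $\Phi_V(\widetilde{\Sigma}_j,s_1)<\infty$), and extract times where this quantity is small. Your diagonalization over an exhaustion is a welcome extra precision --- the paper produces $b$ and $a$ for a fixed $\widetilde{\Sigma}$ and leaves the ``one sequence for all domains'' quantifier implicit --- and the only blemish is that for fixed $j$ your claimed bound $\int_{\mathbb{R}}f_j\,ds\leq\lim_{s\to-\infty}\Phi_V(\Sigma,s)-\lim_{s\to+\infty}\Phi_V(\Sigma,s)$ does not follow as stated (since $\Phi_V(\widetilde{\Sigma}_j,s_2)$ may well be smaller than $\lim_{s\to+\infty}\Phi_V(\Sigma,s)$), but the cruder bound $\int_{\mathbb{R}}f_j\,ds\leq\lim_{s\to-\infty}\Phi_V(\Sigma,s)<\infty$ is all your argument actually uses.
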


\begin{proof}
By (\ref{pointwisemonotonicity1}), $e^{<F(p,s)-sV,V>}d\mu_s$ is non-increasing.
Recall that along a V-flow we have the monotonicity formula (\ref{localmonotonicity1}), i.e.
\begin{eqnarray*}
\frac{d}{ds}\int_{\widetilde{\Sigma}}e^{<F(p,s)-sV,V>}d\mu_s&=&
-\int_{\widetilde{\Sigma}}|H-V^\perp|^2e^{<F(p,s)-sV,V>}d\mu_s.
\end{eqnarray*}
It implies that for any $s_1<s_2$,
\begin{eqnarray}\label{s1s2}
&&\int_{s_1}^{s_2}\int_{\widetilde{\Sigma}}|H-V^\perp|^2e^{<F(p,s)-sV,V>}d\mu_sds\nonumber
\\&=&\int_{\widetilde{\Sigma}}e^{<F(p,s)-sV,V>}d\mu_s|_{s=s_1}-\int_{\widetilde{\Sigma}}e^{<F(p,s)-sV,V>}d\mu_s|_{s=s_2}.
\end{eqnarray}
Hence for any given $s_1$, we have
$$\int_{s_1}^{+\infty}\int_{\widetilde{\Sigma}}|H-V^\perp|^2e^{<F(p,s)-sV,V>}d\mu_sds<\infty.$$
Then for any given $\epsilon>0$ there exists some $b$ which can be chosen arbitrarily large such that
\begin{equation*}
\int_{\widetilde{M}_{b}}|H-V^\perp|^2e^{<x-bV,V>}d\mu_b=\int_{\widetilde{\Sigma}}|H-V^\perp|^2e^{<F(p,b)-bV,V>}d\mu_{b}<\epsilon.
\end{equation*}
This proves (\ref{future}).

By the assumption (\ref{finitepositiveassumption}),
$\lim_{s\rightarrow -\infty}\int_{\widetilde{\Sigma}}e^{<F(p,s)-sV,V>}d\mu_s$ exists and is finite.
Then (\ref{ancient}) follows in a similar way from (\ref{s1s2}).
\end{proof}

\end{document}